\definecolor{webgreen}{rgb}{0,.5,0}
\definecolor{webbrown}{rgb}{.6,0,0}
\definecolor{RoyalBlue}{cmyk}{1, 0.50, 0, 0}
\newtheorem{theorem}{Theorem}
\newtheorem{lemma}[theorem]{Lemma}
\theoremstyle{remark}
\newcommand{\qandq}{\quad \text{and} \quad}
\begin{document}

\title{On an identity by Ercolani,  Lega, and Tippings}

\author{Maxim L. Yattselev}

\address{Department of Mathematical Sciences, Indiana University Indianapolis 
}

\email{\href{mailto:maxyatts@iu.edu}{maxyatts@iu.edu}}

\thanks{The research was supported in part by a grant from the Simons Foundation, CGM-706591.}

\subjclass[2020]{05C30, 33C05}

\keywords{}

\begin{abstract}
In this note we prove that
\[
j!\,2^N \, \binom{N+j-1}{j} \, {}_2F_1\left(\begin{matrix}-j,-2j \\ -N-j+1 \end{matrix};-1\right) = \sum_{l=0}^N \binom{N}{l}\prod_{i=0}^{j-1}2(2i+1+l),
\]
where \( N \) and \( j \)  are positive integers, which resolves a question posed by Ercolani,  Lega, and Tippings.  
\end{abstract}

\maketitle


In \cite[Theorem~2.1]{ErcLegaTip23}, Ercolani, Lega, and Tippings have shown that the number of \( 2\nu \)-valent maps with \( j \) vertices and two legs that can be embedded in a surface of genus \( g\geq 1 \) is given by
\[
j! \left[2\nu(\nu-1)\binom{2\nu-1}{\nu-1}\right]^j \sum_{l=0}^{3g-1} a_l(g,\nu)  \binom{2g-2+l+j}{j} {}_2F_1\left(\begin{matrix}-j,-\nu j \\  2-2g-l-j \end{matrix} \, ;\frac{1}{1-\nu}\right)
\]
for some coefficients \( a_l(g,\nu) \). Based on numerical computations, they subsequently conjectured, see \cite[Conjecture~4.2]{ErcLegaTip23} or \cite[Conjecture~5.1]{ErcLegaTip}, that when \( \nu=2 \) the summands in the above expression can be stated without hypergeometric functions with the help of  identity \eqref{identity} further below. In this note we provide a proof of this fact.

\begin{theorem}
Let \( N,j \) be positive integers. Then
\begin{equation}
\label{identity}
j!2^N \binom{N+j-1}{j} {}_2F_1\left(\begin{matrix}-j,-2j \\ -N-j+1 \end{matrix};-1\right) = \sum_{l=0}^N \binom{N}{l}\prod_{i=0}^{j-1}2(2i+1+l).
\end{equation}
\end{theorem}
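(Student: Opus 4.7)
The plan is to compute the exponential generating function in $j$ of both sides of \eqref{identity} and verify they coincide. First, I would simplify the left-hand side by expanding the terminating hypergeometric series. Using the elementary identities
\[
(-j)_k(-2j)_k = \frac{j!\,(2j)!}{(j-k)!(2j-k)!}, \qquad (-N-j+1)_k = \frac{(-1)^k (N+j-1)!}{(N+j-k-1)!},
\]
and reindexing $m := j-k$, a direct calculation collapses the hypergeometric factor into
\[
\binom{N+j-1}{j}\,{}_2F_1\left(\begin{matrix}-j,-2j \\ -N-j+1 \end{matrix};-1\right) = \sum_{m=0}^{j}\binom{N+m-1}{m}\binom{2j}{j-m}.
\]

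Next, I would compute the generating functions. For the right-hand side, rewriting $\prod_{i=0}^{j-1}2(2i+1+l) = 4^j\bigl(\tfrac{l+1}{2}\bigr)_j$ and applying $\sum_{j\geq 0}\frac{t^j}{j!}(a)_j = (1-t)^{-a}$ gives, with the abbreviation $w := (1-4z)^{-1/2}$,
\[
\sum_{j\geq 0}\frac{z^j}{j!}\,\mathrm{RHS}(N,j) = \sum_{l=0}^N \binom{N}{l}w^{l+1} = w(1+w)^N.
\]
For the left-hand side, the crucial input is the generating function identity
\[
\sum_{j\geq m}\binom{2j}{j-m}z^j = \frac{(C(z)-1)^m}{\sqrt{1-4z}}, \qquad C(z) := \frac{1-\sqrt{1-4z}}{2z},
\]
which follows, via the substitution $k = j-m$, from the standard formula $\sum_{k\geq 0}\binom{2k+n}{k}z^k = C(z)^n/\sqrt{1-4z}$ together with the Catalan equation $zC(z)^2 = C(z) - 1$. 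Combined with $\sum_{m\geq 0}\binom{N+m-1}{m}x^m = (1-x)^{-N}$, this yields
\[
\sum_{j\geq 0}\frac{z^j}{j!}\,\mathrm{LHS}(N,j) = \frac{2^N}{\sqrt{1-4z}\,(2-C(z))^N}.
\]

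Finally, rationalizing $C(z)$ gives $C(z) = 2w/(1+w)$, whence $2-C(z) = 2/(1+w)$, so the left-hand generating function reduces to $w(1+w)^N$ as well, matching the right-hand one. I expect the main obstacle to be the careful bookkeeping of Pochhammer symbols in the first simplification; once the left-hand side has been recast as a finite sum of products of binomials, the remaining steps are routine manipulations with well-known generating functions.
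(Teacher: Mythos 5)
Your proposal is correct, and it takes a genuinely different route from the paper. You first collapse the terminating hypergeometric sum into the single binomial sum $\sum_{m=0}^{j}\binom{N+m-1}{m}\binom{2j}{j-m}$ (the Pochhammer bookkeeping checks out, and the denominator parameter $-N-j+1$ causes no vanishing issues since the series terminates at $k=j<N+j$), and then prove the identity by matching exponential generating functions in $j$: both sides have EGF $w(1+w)^N$ with $w=(1-4z)^{-1/2}$. The key external input is the classical generating function $\sum_{k\geq 0}\binom{2k+n}{k}z^k = C(z)^n/\sqrt{1-4z}$ for the Catalan series $C(z)$, together with $zC(z)^2=C(z)-1$; all the series manipulations are legitimate at the level of formal power series, and the case $j=0$ (where both sides equal $2^N$) makes the inclusion of the constant term harmless. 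The paper instead expands both sides in the falling-factorial basis $(N)^{\underline{i}}$ — using Stirling numbers of the second kind on the right and Lah numbers on the left — and shows the two coefficient arrays satisfy the same two-term recurrence with the same marginals; that argument is entirely finite and elementary but requires a fairly intricate binomial-coefficient computation. Your approach is shorter and more conceptual, produces the closed form $w(1+w)^N$ for the EGF as a byproduct (which is arguably of independent interest), but leans on the known Catalan-type generating function, which you should cite precisely (e.g., Graham--Knuth--Patashnik, \emph{Concrete Mathematics}, eq.\ (5.72)) when writing this up.
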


We use the following notation for the falling and rising factorials:
\[
(a)^{\underline 0} = (a)^{\overline 0} : =1, \quad (a)^{\underline n} := a(a-1)\cdots(a-n+1), \qandq (a)^{\overline n} := a(a+1)\cdots(a+n-1)
\]
for \( n\geq 1 \). We prove \eqref{identity} in three steps formulated as separate lemmas.

\begin{lemma}
The right-hand side of \eqref{identity} is equal to \( 2^N R_j(N) \), where \( R_j(x) = \sum_{i=0}^j R(i,j) (x)^{\underline i} \) and \( R(i,j) \) are numbers uniquely determined by the relations
\[
R(0,j) = 2^j (2j-1)!!, \quad R(j,j) = 1, \quad R(i,j+1) = 2(2j+i+1) R(i,j) + R(i-1,j),
\]
with the recurrence relation holding for \( i\in\{1,2,\ldots,j\} \).
\end{lemma}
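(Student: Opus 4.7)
The plan is to expand the inner product on the right-hand side of \eqref{identity} in the basis of falling factorials. Set $P_j(l) := \prod_{i=0}^{j-1}2(2i+1+l)$ and write $P_j(l) = \sum_{i=0}^{j} Q(i,j)(l)^{\underline i}$ for uniquely determined coefficients $Q(i,j)$. The aim is to show that $Q(i,j)=2^i R(i,j)$, after which the lemma follows immediately.

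The summation side is handled by the binomial identity
\[
\sum_{l=0}^{N} \binom{N}{l}(l)^{\underline i} \;=\; 2^{N-i}(N)^{\underline i},
\]
which follows from $\binom{N}{l}(l)^{\underline i}=(N)^{\underline i}\binom{N-i}{l-i}$ and the binomial theorem. Applying it termwise to the expansion of $P_j(l)$ yields
\[
\sum_{l=0}^{N}\binom{N}{l} P_j(l) \;=\; \sum_{i=0}^{j} 2^{N-i}\, Q(i,j)\, (N)^{\underline i},
\]
which is $2^{N}R_j(N)$ precisely when $Q(i,j)=2^i R(i,j)$.

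To verify the latter, I would derive a recurrence for $Q(i,j)$ from the obvious relation $P_{j+1}(l)=2(2j+1+l)P_j(l)$. The only nontrivial step is the expansion $l\,(l)^{\underline i}=(l)^{\underline{i+1}}+i\,(l)^{\underline i}$, coming from $l=(l-i)+i$. Matching coefficients of $(l)^{\underline i}$ in $P_{j+1}(l)=(4j+2)P_j(l)+2l\,P_j(l)$ produces
\[
Q(i,j+1) \;=\; 2(2j+i+1)\,Q(i,j) \,+\, 2\,Q(i-1,j), \qquad 1\le i\le j,
\]
together with the boundary values $Q(0,j)=P_j(0)=2^j(2j-1)!!$ and $Q(j,j)=2^j$ (the leading coefficient of $P_j$). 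Substituting $Q(i,j)=2^i R(i,j)$, the second term $2\,Q(i-1,j)=2^{i}R(i-1,j)$ absorbs cleanly into the factor $2^i$; after dividing by $2^i$, one recovers exactly the recursion stated in the lemma, and the boundary data reduce to $R(0,j)=2^j(2j-1)!!$ and $R(j,j)=1$.

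There is no real conceptual obstacle here: the argument combines a standard falling-factorial binomial sum with an inductive verification that a polynomial satisfies a prescribed three-term recurrence. The only point requiring care is the bookkeeping of the powers of $2$, to confirm that the rescaling $Q(i,j)=2^i R(i,j)$ converts the natural recurrence for $Q$ into precisely the one posited for $R$.
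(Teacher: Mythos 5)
Your proposal is correct. It shares the paper's overall skeleton --- expand the product $\prod_{i=0}^{j-1}2(2i+1+l)$ in the falling-factorial basis, apply the summation identity $\sum_{l}\binom{N}{l}(l)^{\underline i}=2^{N-i}(N)^{\underline i}$, and then verify that the resulting coefficients satisfy the stated three-term recurrence --- but your route to the recurrence is genuinely different and noticeably cleaner. The paper first expands the product in the monomial basis (coefficients $C_{k,j}$ with their own recurrence), then converts to falling factorials via Stirling numbers of the second kind, and finally verifies the recurrence for $R(i,j)$ by invoking the Stirling recurrence $\bigl\{{k+1\atop i}\bigr\}=i\bigl\{{k\atop i}\bigr\}+\bigl\{{k\atop i-1}\bigr\}$. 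You instead work in the falling-factorial basis from the start and extract the recurrence for your coefficients $Q(i,j)$ directly from $P_{j+1}(l)=(4j+2)P_j(l)+2l\,P_j(l)$ together with $l\,(l)^{\underline i}=(l)^{\underline{i+1}}+i\,(l)^{\underline i}$; your bookkeeping of the powers of $2$ (the rescaling $Q(i,j)=2^iR(i,j)$, with $2Q(i-1,j)=2^iR(i-1,j)$ absorbing into the common factor) checks out, as do the boundary values $Q(0,j)=P_j(0)=2^j(2j-1)!!$ and $Q(j,j)=2^j$. What your version buys is the elimination of the Stirling-number machinery and of the intermediate monomial expansion; what the paper's version buys is that the conversion formula $x^k=\sum_i\bigl\{{k\atop i}\bigr\}(x)^{\underline i}$ can simply be cited, whereas your argument asks the reader to carry out one (easy) basis computation by hand. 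Both are complete proofs; yours is arguably the more economical one.
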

\begin{proof}
Let \( C_{i,j} \) be the following coefficients:
\[
\prod_{i=0}^{j-1}(2i+1+x) = \sum_{k=0}^j C_{k,j} x^k.
\]
Since
\[
\prod_{i=0}^j(2i+1+x) = (2j+1+x) \sum_{k=0}^j C_{k,j} x^k = \sum_{k=0}^j (2j+1) C_{k,j} x^k + \sum_{k=1}^{j+1} C_{k-1,j} x^k,
\]
it holds that
\[
C_{0,j} = (2j-1)!!, \quad C_{j,j} = 1, \quad C_{k,j+1} = (2j+1)C_{k,j} + C_{k-1,j}, \;\; k\in\{1,2,\ldots,j\}.
\]
Exactly as in the case of the coefficients \( R(i,j) \), the coefficients \( C_{k,j} \) are uniquely defined by the above relations since the knowledge of all the coefficients on the level \( j \) allows one to compute all the coefficients on the level \( j+1 \) with the base case \( C_{0,1} = C_{1,1} = 1 \). Recall \cite[Equation~(26.8.10)]{DLMF} that
\[
x^k = \sum_{i=1}^k \begin{Bmatrix} k \\ i \end{Bmatrix} (x)^{\underline i},
\]
where \( \begin{Bmatrix} k \\ i \end{Bmatrix} \) are Stirling numbers of the second kind.  Therefore,
\[
\prod_{i=0}^{j-1}(2i+1+x) = C_{0,j} + \sum_{i=1}^j \left(\sum_{k=i}^j C_{k,j}\begin{Bmatrix} k \\ i \end{Bmatrix}\right) (x)^{\underline i}.
\]
Observe that
\begin{align*}
\sum_{l=0}^N \binom Nl (l)^{\underline i} & = \sum_{l=0}^N \frac{N!}{l!(N-l)!}(l)(l-1)\cdots (l-i+1) \\
& = \sum_{l=i}^N \frac{N!}{(l-i)!(N-l)!} =  \sum_{l=0}^{N-i} \frac{N!}{l!(N-i-l)!} = 2^{N-i} (N)^{\underline i}.
\end{align*}
Therefore, the right-hand side of \eqref{identity} is equal to
\[
2^j \sum_{l=0}^N\binom Nl \left(C_{0,j} + \sum_{i=1}^j \left(\sum_{k=i}^j C_{k,j}\begin{Bmatrix} k \\ i \end{Bmatrix}\right) (l)^{\underline i} \right)  = 2^N R_j(N),
\]
where \( R_j(x) := \sum_{i=0}^j R(i,j) (x)^{\underline i} \) with
\[
R(0,j) := 2^j(2j-1)!! \qandq R(i,j) := 2^{j-i}\sum_{k=i}^j C_{k,j}\begin{Bmatrix} k \\ i \end{Bmatrix}, \;\; i\in\{1,2,\ldots,j\}.
\]
Since \( \begin{Bmatrix} j \\ j \end{Bmatrix}= 1 \), see  \cite[Equation~(26.8.4)]{DLMF}, it indeed holds that \( R(j,j) = 1 \). Thus, we only need to establish the recurrence relation. The recurrence relation for \( C_{k,j} \) yields that
\begin{align*}
R(i,j+1) & = 2^{j+1-i}\sum_{k=i}^{j+1} C_{k,j+1} \begin{Bmatrix} k \\ i \end{Bmatrix} = 2^{j+1-i}\sum_{k=i}^j C_{k,j+1}\begin{Bmatrix} k \\ i \end{Bmatrix} + 2^{j+1-i}\begin{Bmatrix} j+1 \\ i \end{Bmatrix} \\
& = 2(2j+1) R(i,j) + 2^{j+1-i} \sum_{k=i}^{j+1} C_{k-1,j}\begin{Bmatrix} k \\ i \end{Bmatrix}
\end{align*}
for any \( i\in\{1,2,\ldots, j\} \), where we also used the fact that \( C_{j,j}=1 \). Furthermore, we get from \cite[Equation~(26.8.22)]{DLMF} that
\begin{align*}
2^{j+1-i} \sum_{k=i}^{j+1} C_{k-1,j}\begin{Bmatrix} k \\ i \end{Bmatrix} & = 2^{j+1-i} \sum_{k=i-1}^j C_{k,j} \begin{Bmatrix} k+1 \\ i \end{Bmatrix} = 2^{j+1-i} \sum_{k=i-1}^j C_{k,j} \left( i\begin{Bmatrix} k \\ i \end{Bmatrix} + \begin{Bmatrix} k \\ i-1 \end{Bmatrix}\right) \\
&  = 2i R(i,j) + R(i-1,j),
\end{align*}
where, by convention, \( \begin{Bmatrix} i-1 \\ i \end{Bmatrix}= 0\). This finishes the proof of the lemma.
\end{proof}

\begin{lemma}
The left-hand side of \eqref{identity} is equal to \( 2^N L_j(N) \), where \( L_j(x) = \sum_{i=0}^j L(i,j) (x)^{\underline i} \) with
\[
L(0,j ) = \frac{(2j)!}{j!} \qandq L(i,j) = \frac{j!}{i!}\sum_{k=i}^j \binom{2j}{j+k} \binom{k-1}{i-1}, \quad i\in\{1,2,\ldots,j\}.
\]
\end{lemma}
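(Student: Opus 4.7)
The plan is to expand the hypergeometric function as a terminating finite sum, simplify the prefactors, and then convert each rising factorial in $N$ into falling factorials using a Vandermonde-type identity.

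First, I would write
\[
{}_2F_1\left(\begin{matrix}-j,-2j \\ -N-j+1 \end{matrix};-1\right) = \sum_{k=0}^j \frac{(-j)_k(-2j)_k}{(-N-j+1)_k\, k!}(-1)^k,
\]
with $(a)_k = a(a+1)\cdots(a+k-1)$, and use $(-a)_k = (-1)^k a!/(a-k)!$ for $a \in \{j,\, 2j,\, N+j-1\}$. The four sign contributions (from $(-j)_k$, $(-2j)_k$, the reciprocal of $(-N-j+1)_k$, and the argument $(-1)^k$) multiply to $(-1)^{4k} = 1$, so no signs remain. Multiplying by $j!\binom{N+j-1}{j} = (N+j-1)!/(N-1)!$, applying $(N+j-1-k)!/(N-1)! = (N)^{\overline{j-k}}$, and then substituting $m = j - k$, the left-hand side of \eqref{identity} divided by $2^N$ becomes
\[
\sum_{m=0}^j \frac{j!(2j)!}{m!(j+m)!(j-m)!}(N)^{\overline m} = \sum_{m=0}^j \frac{j!}{m!}\binom{2j}{j+m}(N)^{\overline m}.
\]

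The $m=0$ summand equals $j!\binom{2j}{j} = (2j)!/j!$, matching $L(0,j)$ and supplying the entire $(N)^{\underline 0}$ coefficient. For $m \geq 1$ I would use $(N)^{\overline m} = (N+m-1)^{\underline m}$ together with the Vandermonde-Chu identity
\[
(N + (m-1))^{\underline m} = \sum_{i=0}^m \binom{m}{i}(N)^{\underline i}(m-1)^{\underline{m-i}}.
\]
The $i=0$ term vanishes because $(m-1)^{\underline m}$ contains the factor $0$, and for $i \geq 1$ one has $(m-1)^{\underline{m-i}} = (m-1)!/(i-1)!$, so after the algebraic simplification $\binom{m}{i}(m-1)!/(i-1)! = \binom{m-1}{i-1}\,m!/i!$ we obtain
\[
(N)^{\overline m} = \sum_{i=1}^m \binom{m-1}{i-1}\frac{m!}{i!}(N)^{\underline i}.
\]

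Substituting this expansion into the sum above and interchanging the order of summation, the coefficient of $(N)^{\underline i}$ for $i \in \{1,\ldots,j\}$ is exactly
\[
L(i,j) = \frac{j!}{i!}\sum_{m=i}^j \binom{2j}{j+m}\binom{m-1}{i-1},
\]
which completes the lemma. The main obstacle is careful sign bookkeeping in the hypergeometric expansion and remembering the rising-to-falling factorial conversion; once these are in place the remaining manipulation is mechanical.
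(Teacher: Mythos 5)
Your proposal is correct and follows essentially the same route as the paper: expand the terminating \( {}_2F_1 \), absorb the prefactor to obtain \( \sum_{m=0}^j \frac{j!}{m!}\binom{2j}{j+m}(N)^{\overline m} \), and then convert rising to falling factorials via the identity \( (N)^{\overline m} = \sum_{i=1}^m \binom{m-1}{i-1}\frac{m!}{i!}(N)^{\underline i} \). The only cosmetic difference is that you derive this last identity from the Chu--Vandermonde expansion of \( (N+m-1)^{\underline m} \), whereas the paper cites it directly as the Lah-number formula.
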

\begin{proof}
It follows from \cite[Equation~(15.2.1)]{DLMF} that the left-hand side of \eqref{identity} is equal to \( 2^N L_j(N) \), where
\[
L_j(x) := (x)^{\overline j}\sum_{k=0}^j \frac{(-j)^{\overline k}(-2j)^{\overline k}}{(-x-j+1)^{\overline k}}\frac{(-1)^k}{k!}
\]
(we take this formula as the definition of \( L_j(x) \) and show that this polynomial also admits an expression as in the statement of the lemma). Since
\[
(-a)^{\overline k} = (-1)^k(a)^{\underline k} \qandq \frac{(a)^{\overline j}}{(a+j-1)^{\underline k}} = (a)^{\overline{j-k}},
\]
we have that
\begin{align*}
L_j(x) & = \sum_{k=0}^j (j)^{\underline k}(2j)^{\underline k} \frac{(x)^{\overline{j-k}}}{k!} = \sum_{k=0}^j \frac{j!}{(j-k)!}\frac{(2j)!}{(2j-k)!} \frac{(x)^{\overline{j-k}}}{k!} \\
& = \sum_{k=0}^j \binom{j}{k} \frac{(2j)!}{(2j-k)!} (x)^{\overline{j-k}} = \sum_{k=0}^j \binom{j}{k} \frac{(2j)!}{(j+k)!} (x)^{\overline k},
\end{align*}
where we replaced \( j-k \) by \( k \) to get the last equality. It is known \cite{Lah55}  that
\[
(x)^{\overline k} = \sum_{i=1}^{k} \binom{k-1}{i-1}\frac{k!}{i!} (x)^{\underline i}, \quad k\geq 1.
\]
Hence,
\begin{align*}
L_j(x) &  = \frac{(2j)!}{j!} + \sum_{k=1}^j \binom{j}{k} \frac{(2j)!}{(j+k)!} \sum_{i=1}^{k} \binom{k-1}{i-1}\frac{k!}{i!} (x)^{\underline i} \\
& = \frac{(2j)!}{j!} + \sum_{i=1}^j \left( \sum_{k=i}^j \binom{j}{k} \frac{(2j)!}{(j+k)!}\binom{k-1}{i-1}\frac{k!}{i!} \right) (x)^{\underline i},
\end{align*}
which finishes the proof of the lemma.
\end{proof}

\begin{lemma}
It holds that \( L(i,j) = R(i,j) \) for all \(  i\in\{0,1,\ldots,j\} \) and \( j\geq 1 \). In particular, \eqref{identity} is true.
\end{lemma}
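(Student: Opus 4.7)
The plan is to show that $L(i,j)$ satisfies the same defining relations as $R(i,j)$ from the first lemma, and then invoke uniqueness of the double sequence determined by those relations. Specifically, I need to verify three things: the boundary identities $L(0,j) = 2^j(2j-1)!!$ and $L(j,j) = 1$, together with the recurrence $L(i,j+1) = 2(2j+i+1) L(i,j) + L(i-1,j)$ for $1 \le i \le j$.

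The boundary identities are short. The equality $L(0,j) = 2^j(2j-1)!!$ follows from the duplication formula $(2j)! = 2^j\, j!\, (2j-1)!!$. For $L(j,j) = 1$, only the $k=j$ term in the defining sum survives, and it contributes $\binom{2j}{2j}\binom{j-1}{j-1} = 1$.

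The recurrence is the main work. Starting from
\[
L(i,j+1) = \frac{(j+1)!}{i!}\sum_{k=i}^{j+1}\binom{2j+2}{j+1+k}\binom{k-1}{i-1},
\]
I would apply Pascal's rule twice to decompose $\binom{2j+2}{j+1+k} = \binom{2j}{j+k-1} + 2\binom{2j}{j+k} + \binom{2j}{j+k+1}$. Substituting this and shifting the summation index by $\pm 1$ in the outer two pieces reorganizes the expression into a combination of three sums, each having $\binom{2j}{j+k}$ as common first factor but with second factors $\binom{k}{i-1}$, $\binom{k-1}{i-1}$, and $\binom{k-2}{i-1}$, respectively, and with slightly different summation ranges whose endpoint terms need tracking. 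A further application of Pascal's rule to the shifted second factors, namely $\binom{k}{i-1} = \binom{k-1}{i-1} + \binom{k-1}{i-2}$ and $\binom{k-2}{i-1} = \binom{k-1}{i-1} - \binom{k-2}{i-2}$, then collapses everything to sums of $\binom{2j}{j+k}\binom{k-1}{i-1}$ and $\binom{2j}{j+k}\binom{k-1}{i-2}$, which are precisely the sums appearing in $L(i,j)$ and $L(i-1,j)$.

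The hardest part will be producing the exact coefficient $2(2j+i+1) = 2(2j+1) + 2i$. The $2(2j+1)$ piece is the natural factor relating $\binom{2j+2}{j+1+k}$ to $\binom{2j}{j+k}$ and can be isolated using the absorption identity $(j+1-k)\binom{2j+1}{j+k} = (2j+1)\binom{2j}{j+k}$, which converts the stray factor of $(j+1)$ out front into the desired coefficient. The remaining $2i$ must be extracted from the $k$-dependent weights that appear after shifting indices, via an absorption identity in the lower index such as $k\binom{k-1}{i-1} = i\binom{k}{i}$, combined with one more use of Pascal's rule. Careful bookkeeping of the boundary contributions at $k \in \{i-1, i, j, j+1\}$ (where some of the binomials vanish or coincide) should close the argument and yield the recurrence, hence the equality of $L$ and $R$ and therefore \eqref{identity}.
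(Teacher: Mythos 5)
Your strategy is the same as the paper's: check the marginals \(L(0,j)=2^j(2j-1)!!\), \(L(j,j)=1\), establish the recurrence \(L(i,j+1)=2(2j+i+1)L(i,j)+L(i-1,j)\), and conclude by uniqueness of the table determined by these data. The marginal checks and the opening move --- expanding \(\binom{2j+2}{j+1+k}=\binom{2j}{j+k-1}+2\binom{2j}{j+k}+\binom{2j}{j+k+1}\) and shifting indices --- are exactly what the paper does. But there is a genuine gap precisely at the step you flag as the hardest, and it is not closable by ``careful bookkeeping'' alone. Your claim that a further round of Pascal's rule collapses everything to sums of \(\binom{2j}{j+k}\binom{k-1}{i-1}\) and \(\binom{2j}{j+k}\binom{k-1}{i-2}\) is not accurate: the identity \(\binom{k-2}{i-1}=\binom{k-1}{i-1}-\binom{k-2}{i-2}\) leaves behind terms \(\binom{2j}{j+k}\binom{k-2}{i-2}\), which are of neither type, and repeated use of Pascal's rule only pushes the obstruction down to \(\binom{k-2}{i-3}\). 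After all the rearrangements one arrives at \(L(i,j+1)=4(j+1)L(i,j)+\frac{(j+1)!}{i!}\sum_{k=i-1}^{j}\binom{2j}{j+k}\binom{k-2}{i-3}\), and matching the residue with the required \(2(i-1)L(i,j)+L(i-1,j)\) amounts to proving
\[
\sum_{k=i-1}^{j}\binom{2j}{j+k}\left[2(i-1)\binom{k-1}{i-1}+i\binom{k-1}{i-2}-(j+1)\binom{k-2}{i-3}\right]=0.
\]
This vanishing is the real content of the lemma. The paper proves it by rewriting the bracket, up to the factor \(\frac{(k-2)!}{(k-i+1)!(i-2)!}\), as \((k-i+1)(j+k)-(j-k)(k-1)\), and then reindexing one of the two resulting sums into the other via \((j-k)\binom{2j}{j+k}=(j+k+1)\binom{2j}{j+k+1}\). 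No counterpart of this cancellation appears in your outline.

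Two further points. First, your mechanism for producing the coefficient \(2(2j+1)\), namely the absorption identity \((j+1-k)\binom{2j+1}{j+k}=(2j+1)\binom{2j}{j+k}\), is a true identity but cannot be applied as described: the factor \(j+1-k\) depends on the summation index and cannot be traded for the constant prefactor \(j+1\) without injecting new \(k\)-dependent weights into the sum, so it does not isolate a clean multiple of \(L(i,j)\). Second, the case \(i=1\) (and to a lesser extent \(i=2\)) requires separate treatment because several of the binomials above have negative lower index there; the paper handles \(i=1\) by a short direct computation using \(\binom{2j}{j}-\binom{2j}{j+1}=\frac{1}{j+1}\binom{2j}{j}\). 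Until the displayed identity is actually established and these edge cases are dispatched, the recurrence for \(L\), and hence the lemma, is not proved.
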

\begin{proof}
Clearly, \( L(0,j) = R(0,j) \) and \( L(j,j) = R(j,j) = 1\). Thus, we only need to show that
\[
L(i,j+1) = 2(2j+i+1) L(i,j) + L(i-1,j), \quad i\in\{1,2,\ldots,j\},
\] 
since this recurrence relation and the marginals \( L(0,j) \), \( L(j,j) \) uniquely determine the whole table \( L(i,j) \). In what follows, we agree that binomial coefficients with out-of-range indices are set to zero. In what follows we repeatedly use the elementary identity
\[
\binom{n+1}{l} = \binom nl + \binom n{l-1}.
\]
Using this identity twice and our convention concerning binomial coefficients with out-of-range indices, we get that
\begin{align*}
L(i,j+1) &= 2(j+1)L(i,j) + \frac{(j+1)!} {i!} \sum_{k=i}^{j+1}\left[ \binom{2j}{j+k+1} + \binom{2j}{j+k-1} \right]\binom{k-1}{i-1} \\
& = 2(j+1)L(i,j) + \frac{(j+1)!} {i!} \left[\sum_{k=i+1}^{j} \binom{2j}{j+k}\binom{k-2}{i-1} + \sum_{k=i-1}^j \binom{2j}{j+k}\binom{k}{i-1}\right],
\end{align*}
where the second row is obtained simply by a change of summation indices. Therefore, when \( i=1 \), we get that
\begin{align*}
L(1,j+1) &= 4(j+1)L(1,j) + (j+1)! \left[ \binom{2j}{j} - \binom{2j}{j+1} \right] \\
& =4(j+1)L(1,j) + L(0,j) \big[j+1 - j \big],
\end{align*}
which establishes the desired recurrence relation for \( i=1 \). On the other hand, when \( i>1 \), the above sum in square brackets can further be rewritten with the help of the elementary identity as
\begin{multline*}
\sum_{k=i+1}^j \binom{2j}{j+k}\left[ \binom{k-1}{i-1} - \binom{k-2}{i-2} \right]  + \sum_{k=i-1}^j \binom{2j}{j+k}\left[ \binom{k-1}{i-1} + \binom{k-1}{i-2} \right]  = \\  2\sum_{k=i}^j \binom{2j}{j+k} \binom{k-1}{i-1} + \sum_{k=i-1}^j \binom{2j}{j+k}\left[ \binom{k-1}{i-2} -  \binom{k-2}{i-2}  \right],
\end{multline*}
where the last sum simply reduced to \( \binom{2j}{j+1} \) when \( i=2 \). Thus, using the elementary identity once more, we get that
\[
L(i,j+1) = 4(j+1)L(i,j) + \frac{(j+1)!} {i!} \sum_{k=i-1}^j \binom{2j}{j+k} \binom{k-2}{i-3},
\]
which holds even for \( i=2 \) if we understand that \( \binom{-1}{-1} = 1 \) and \( \binom{k-2}{-1} = 0 \), \( k\geq 2 \). That is, to prove the lemma we need to show that
\[
2(i-1) L(i,j) + L(i-1,j) = \frac{(j+1)!} {i!}  \sum_{k=i-1}^j \binom{2j}{j+k} \binom{k-2}{i-3},
\]
or equivalently that
\[
\sum_{k=i-1}^j \binom{2j}{j+k} \left [ 2(i-1) \binom{k-1}{i-1} + i \binom{k-1}{i-2} -  (j+1) \binom{k-2}{i-3} \right] = 0.
\] 
When \( i=2 \), the left-hand side above becomes
\[
\sum_{k=2}^j 2k\binom{2j}{j+k} - (j-1) \binom{2j}{j+1} = \sum_{k=2}^j (j+k)\binom{2j}{j+k} -  \sum_{k=1}^{j-1} (j-k)\binom{2j}{j+k}.
\]
Similarly, when \( i>2 \), the left-hand side in question is equal to
\begin{multline*}
\sum_{k=i-1}^j \binom{2j}{j+k} \frac{(k-2)!}{(k-i+1)!(i-2)!} \big [  2(k-1)(k-i+1) + i(k-1) - (j+1)(i-2) \big] = \\
\sum_{k=i-1}^j \binom{2j}{j+k} \frac{(k-2)!}{(k-i+1)!(i-2)!} \big [  (k-i+1)(j+k) - (j-k)(k-1) \big],
\end{multline*}
which is the same as
\[
\sum_{k=i}^j  (j+k) \binom{2j}{j+k} \binom{k-2}{i-2} -  \sum_{k=i-1}^{j-1}  (j-k) \binom{2j}{j+k} \binom{k-1}{i-2}
\]
(as shown just before, this formula is valid for \( i=2 \) as well). This difference is indeed equal to zero as the second sum above can be rewritten as
\[
\sum_{k=i-1}^{j-1}  (j+k+1) \binom{2j}{j+k+1} \binom{k-1}{i-2} = \sum_{l=i}^j  (j+l) \binom{2j}{j+l} \binom{l-2}{i-2}. \qedhere
\]
\end{proof}

\small

\bibliographystyle{plain}
\bibliography{../../../../bibliography}

\end{document}